\renewcommand{\Tilde}{\widetilde}
\newcommand{\RR}{\mathbb{R}}
\newcommand{\CC}{\mathbb{C}}
\newcommand{\NN}{\mathbb{N}}
\newcommand{\ZZ}{\mathbb{Z}}
\def\Span{\mathrm{Span}}
\newtheorem{theorem}{Theorem}
\newtheorem{prop}[theorem]{Proposition}
\newtheorem{lemma}[theorem]{Lemma}
\newtheorem{definition}[theorem]{Definition}
\newtheorem{example}[theorem]{Example}
\theoremstyle{definition}
\theoremstyle{remark}
\newtheorem{rem}[theorem]{\bf Remark}
\title[Torsion-free connections on $G$-structures]{Torsion-free connections on $G$-structures}
\author[B. Flamencourt]{Brice Flamencourt}
\address[B. Flamencourt]{Universität Stuttgart, Institut für Geometrie und Topologie, Fachbereich Mathematik, Pfaffenwaldring 57, 70569 Stuttgart, Germany.}
\email{brice.flamencourt@mathematik.uni-stuttgart.de}
\subjclass[2020]{53C05, 53C10, 53C18}
\keywords{$G$-structures, Connections, Conformal geometry, Weyl structures}
\begin{document}

\maketitle

\begin{abstract}
We prove that for a Lie group $\mathrm{SO}_n(\mathrm{R}) \subset G \subset \mathrm{GL}_n (\mathrm{R})$, any $G$-structure on a smooth manifold can be endowed with a torsion free connection which is locally the Levi-Civita connection of a Riemannian metric in a given conformal class. In this process, we classify the admissible groups.
\end{abstract}

\section{Introduction}

Let $M$ be a smooth manifold of dimension $n$ and $G$ a closed subgroup of $\mathrm{GL}_n (\RR)$. A $G$-structure on $M$ is a reduction of the frame bundle of $M$ to $G$ \cite[Chapter 4]{AM}.

We recall the following well-known result (see for example \cite{Crai}):

\begin{prop} \label{Gstructw}
Let $G$ be a closed subgroup of $\mathrm{GL}_n (\RR)$ containing $\mathrm{SO}_n (\RR)$ and let $P$ be a $G$-structure on $M$. Then, there exists a torsion-free connection on $P$. 
\end{prop}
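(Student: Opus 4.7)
The plan is to reduce the existence of a torsion-free $G$-connection on $P$ to a purely algebraic surjectivity statement for the Spencer operator, and then to exploit the inclusion $\mathrm{SO}_n(\RR) \subset G$ to verify that surjectivity at the level of Lie algebras.

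First, I would produce some connection $\omega_0$ on $P$ by the usual partition-of-unity construction (any principal bundle over a paracompact manifold admits a connection), and consider its torsion, which is a section $T_0$ of the associated bundle with fiber $\Lambda^2 (\RR^n)^* \otimes \RR^n$. Any other $G$-connection differs from $\omega_0$ by a tensorial $\mathfrak{g}$-valued 1-form $A$, i.e.\ a section of $T^*M \otimes \mathrm{ad}(P)$, and the torsion transforms as $T_0 \mapsto T_0 + \partial A$, where $\partial$ is the Spencer operator
\[
\partial \colon (\RR^n)^* \otimes \mathfrak{g} \longrightarrow \Lambda^2 (\RR^n)^* \otimes \RR^n, \qquad \partial(\alpha \otimes B)(x,y) = \alpha(x)\, B y - \alpha(y)\, B x.
\]
If the bundle map induced by $\partial$ is fiberwise surjective, any smooth right-inverse $s$ produces the correction $A := -s(T_0)$, and $\omega := \omega_0 + A$ is a $G$-connection on $P$ with vanishing torsion.

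The crux is therefore the algebraic claim: if $\mathfrak{so}_n \subset \mathfrak{g}$, then $\partial$ is surjective. Since the image of $\partial$ only grows with $\mathfrak{g}$, it suffices to treat $\mathfrak{g} = \mathfrak{so}_n$, for which the source and target spaces both have dimension $n\binom{n}{2}$. A direct Koszul-type formula expressing a preimage of a prescribed skew tensor $\tau$ as a symmetric combination of values of $\tau$ shows that $\partial$ is even a linear isomorphism in that case; this is nothing but the algebraic heart of the existence and uniqueness of the Levi-Civita connection. I expect this algebraic lemma to be the main (and essentially only) substantive obstacle: once it is in place, passing from a pointwise linear surjection to a globally smooth solution $A$ is routine, and the rest of the argument is general principal-bundle theory.
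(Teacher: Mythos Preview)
Your argument is correct and is essentially the same as the paper's: the paper also reduces the question to the surjectivity of the map $\partial$ (equivalently, the vanishing of the intrinsic torsion), and then exhibits an explicit Koszul-type preimage $2\xi(X)(Y)=\phi(X,Y)-\phi(X,\cdot)^*(Y)-\phi(Y,\cdot)^*(X)$ landing in $(\RR^n)^*\otimes\mathfrak{so}_n\subset(\RR^n)^*\otimes\mathfrak{g}$. The only cosmetic difference is that you phrase things via the Spencer operator and a dimension count, whereas the paper writes the explicit right inverse directly.
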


Proposition~\ref{Gstructw} was stated as an exercise in \cite[Section 17.4, exercise (1)]{AM}. However, the author of this exercise believed that a more precise result occurs if we consider the following strategy of proof: we take a reduction of $P$ to $\mathrm{O}_n (\RR)$ or $\mathrm{SO}_n (\RR)$, which defines a Riemannian metric $g$, and then the Levi-Civita connection of $g$ is the desired torsion-free connection. This implies the stronger result that the connection on $P$ can be taken to be the Levi-Civita connection of a metric on $M$. However, such a reduction fails to exist in general, as shown by the following example:

\begin{example} \label{Counter-ex}
We consider the circle $S^1 \subset \CC$, parametrized by the map $\psi : [0, 2 \pi) \ni \theta \to e^{i \theta}$. Its tangent bundle is given by $TS^1 \simeq S^1 \times \RR$, and its frame bundle is $\mathrm{Fr} (S^1) \simeq S^1 \times \RR^*$. Let $G$ be the closed subgroup of $\RR^*$ generated by $2$, and let $P$ be the $G$-structure of $S^1$ given by $P_{\psi(\theta)} = \{ \psi(\theta) \} \times 2^\frac{\theta}{2 \pi} G$ for any $\theta \in [0, 2 \pi)$. There is no reduction of $P$ to $G \cap \mathrm{O}_1(\RR) = \{ 1 \}$ because $P$ is a non-trivial principal bundle. Note that the bundle $P$ is an embedding of the universal cover of $S^1$ into its frame bundle.
\end{example}

Nevertheless, we can prove that the torsion-free connection in the setting of Proposition~\ref{Gstructw} is locally induced by a Riemannian metric, and this is the object of this note. This can be described more precisely using conformal geometry. A discussion about the following definitions can be found in \cite[Section 2.2.2]{BH3M}. 

We recall that the conformal group $\mathrm{CO}_n (\RR)$ is the group of all matrices $\lambda S$ for $(\lambda, S) \in \RR^* \times \mathrm{O}_n (\RR)$. Given a Riemannian metric $g$ on $M$, the conformal class $[g]$ of $g$ is the set of all the metrics $g'$ such that there exists a function $f : M \to \RR$ satisfying $e^{2f} g' = g$. There is a one-to-one correspondence between $\mathrm{CO}_n(\RR)$-structures and conformal classes $[g]$, since any restriction of a $\mathrm{CO}_n(\RR)$-structure to $\mathrm{O}_n(\RR)$ defines a Riemannian metric $g$. We can then define the Weyl structures, which are the analogue in conformal geometry of the Levi-Civita connection in Riemannian geometry:

\begin{definition} \label{Weyl}
A Weyl connection (or structure) on a conformal manifold $(M, [g])$ is a torsion-free connection $\nabla$ such that there exists a $1$-form $\theta$, called the Lee form of $\nabla$ with respect to $g$, satisfying $\nabla g = -2 \theta \otimes g$.

Equivalently, if $P$ is the $\mathrm{CO}_n(\RR)$-structure associated to the conformal class $[g]$, a Weyl structure is a torsion-free connection on $P$.
\end{definition}

In Definition~\ref{Weyl}, when the Lee form $\theta$ is closed, -which does not depend on the choice of the metric in the conformal class,- then the Weyl structure $\nabla$ is called {\em closed}. This is equivalent to the fact that around any point in $M$, $\nabla$ is the Levi-Civita connection of a metric in $[g]$.

With this definition, we state the result we will prove in this note:

\begin{theorem} \label{maintheorem}
Let $M$ be a smooth $n$-dimensional manifold. Let $G$ be a closed subgroup of $\mathrm{GL}_n (\RR)$ containing $\mathrm{SO}_n (\RR)$ and let $P$ be a $G$-structure on $M$. Then, there is a reduction $Q$ of $P$ to $G \cap \mathrm{CO}_n (\RR)$ and a torsion-free connection on $Q$ such that the connection induced on the $\mathrm{CO}_n (\RR)$-structure given by the extension of $Q$ to $\mathrm{CO}_n (\RR)$ is a closed Weyl structure.
\end{theorem}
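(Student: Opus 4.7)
\emph{Classification.} The plan is to analyze the Lie algebra $\mathfrak{h}$ of $H := G \cap \mathrm{CO}_n(\RR)$. Since $\mathfrak{gl}_n(\RR) = \mathfrak{so}_n(\RR) \oplus \RR \cdot \mathrm{Id} \oplus \mathrm{Sym}_0(\RR)$ is a decomposition into irreducible $\mathrm{SO}_n(\RR)$-modules (with $\mathrm{Sym}_0$ the traceless symmetric matrices), the only Lie subalgebras of $\mathfrak{gl}_n(\RR)$ containing $\mathfrak{so}_n(\RR)$ are $\mathfrak{so}_n, \mathfrak{co}_n, \mathfrak{sl}_n, \mathfrak{gl}_n$. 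Using $N_{\mathrm{GL}_n(\RR)}(\mathrm{SO}_n(\RR)) = \mathrm{CO}_n(\RR)$, the first two algebras force $G \subset \mathrm{CO}_n(\RR)$ and the last two force $G \supset \mathrm{SL}_n(\RR)$, so in every case $\mathfrak{h} \in \{\mathfrak{so}_n, \mathfrak{co}_n\}$. In addition $G/H$ is a contractible manifold (trivially when $G \subset \mathrm{CO}_n$; otherwise it identifies with a convex cone in the space of conformal classes of inner products on $\RR^n$), so the fiber bundle $P/H \to M$ admits a global section by a partition-of-unity argument, yielding the reduction $Q$.

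\emph{Case $\mathfrak{h} = \mathfrak{co}_n$.} Here $H$ contains a scaling one-parameter subgroup, and the same contractibility argument gives a further reduction of $Q$ to an $H \cap \mathrm{O}_n(\RR)$-subbundle, equivalently a Riemannian metric $g$ in the conformal class induced by $Q$. Its Levi-Civita connection is a torsion-free connection on $Q$, and on the $\mathrm{CO}_n(\RR)$-extension it is the closed Weyl structure with zero Lee form with respect to $g$.

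\emph{Main obstacle: case $\mathfrak{h} = \mathfrak{so}_n$.} Now $H \cap \RR_+$ is a discrete (possibly non-trivial) subgroup of $\RR_+$, and $Q$ may admit no further global reduction to a Riemannian metric, as illustrated by the example in the introduction. The idea is a local-to-global argument: choose local sections $s_\alpha$ of $Q$ over an open cover $\{U_\alpha\}$ of $M$ and define local Riemannian metrics $g_\alpha$ making $s_\alpha$ orthonormal. Writing the transition functions as $h_{\alpha\beta} = \lambda_{\alpha\beta} O_{\alpha\beta}$ with $\lambda_{\alpha\beta}: U_\alpha \cap U_\beta \to H \cap \RR_+$ and $O_{\alpha\beta}: U_\alpha \cap U_\beta \to H \cap \mathrm{O}_n(\RR)$, discreteness of $H \cap \RR_+$ forces $\lambda_{\alpha\beta}$ to be locally constant, so $g_\beta = \lambda_{\alpha\beta}^{-2} g_\alpha$ is a locally constant conformal rescaling of $g_\alpha$. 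Since the Levi-Civita connection is invariant under multiplication of the metric by a constant, the local connections $\nabla^{g_\alpha}$ agree on overlaps and glue to a global torsion-free connection on $Q$, which is by construction locally the Levi-Civita connection of a metric in the conformal class, hence a closed Weyl structure on the $\mathrm{CO}_n(\RR)$-extension.
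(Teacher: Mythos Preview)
Your argument is correct and takes a genuinely different route, modulo two small slips. First, in the discrete case the scale factor $\lambda_{\alpha\beta}$ need not lie in $H\cap\RR_+$ (take $H=\langle\mathrm{SO}_n,\,2R\rangle$ for a reflection $R$: then $2R\in H$ but $2I_n\notin H$); what you actually need, and what holds, is that the map $\mathrm{CO}_n\to\RR_+$, $\lambda O\mapsto\lambda$, factors through the discrete quotient $H/\mathrm{SO}_n$, so $\lambda_{\alpha\beta}$ is locally constant regardless. Second, when $G\supset\mathrm{SL}_n$ the quotient $G/H$ identifies with all of $\mathrm{GL}_n/\mathrm{CO}_n\cong\mathrm{SL}_n/\mathrm{SO}_n$, which is contractible but not literally a convex cone; the partition-of-unity step still goes through if you average in the convex cone of inner products and then renormalize the determinant.

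The paper proceeds quite differently. Its classification (Proposition~\ref{structure}) is obtained by hand, proving maximality of $\mathrm{SO}_n$ in $\mathrm{SL}_n$ via polar decomposition and an intermediate-value argument, rather than through the irreducible $\mathrm{SO}_n$-decomposition of $\mathfrak{gl}_n$ and the identity $N_{\mathrm{GL}_n}(\mathrm{SO}_n)=\mathrm{CO}_n$ that you use. For the discrete case, instead of gluing local Levi-Civita connections, the paper lifts to the universal cover $\Tilde M$, trivializes the induced $H$-bundle there to obtain a global metric $\Tilde g$ on which $\pi_1(M)$ acts by similarities, and then descends the Levi-Civita connection; the subcase $G=\mathrm{SL}_n\rtimes H$ requires an extra step, building $\Tilde g$ from the volume form and an auxiliary metric. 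Your approach is more uniform (the reduction $Q$ absorbs the $\mathrm{SL}_n$ case at the outset) and avoids covering spaces entirely, while the paper's approach is more elementary, needing no representation theory, and makes the global similarity geometry on $\Tilde M$ explicit.
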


In order to illustrate this theorem and to give an idea of its proof, we come back to Example~\ref{Counter-ex}. Here, even if there is no reduction of the structure group to $\mathrm{SO}_1 (\RR) = \{1\}$, we can observe that $G$ seats inside $ \mathrm{CO}_1(\RR)$ and is a discrete group. Hence, the pull-back of the bundle $P$ to the universal cover $\RR$ of $S^1$ is a trivial bundle, whose total space is equal to $\{ (x, 2^{\frac{x}{2 \pi} + k}) \ \vert  \ x \in \RR, \ k \in \ZZ\}$, and the projection onto the basis is the first projection. The metric on $\RR$ given by $2^{-\frac{x}{\pi}} \langle \cdot, \cdot \rangle$ (where $\langle \cdot, \cdot \rangle$ stands for the standard metric) then induces a covariant derivative $\tilde \nabla$ on $\RR$. Seeing the smooth sections of $T \RR$ as smooth maps from $\RR$ to $\RR$, one has 
\[
\tilde \nabla_X Y = X \cdot \frac{d}{dx} Y - \frac{\ln 2}{2 \pi} X \cdot Y
\]
for any vector fields $X$, $Y$ on $\RR$. This connection descends to a connection $\nabla$ on $S^1$ because the translation $x \mapsto x + 2 \pi$ is an affine map. Moreover, $\nabla$ is torsion-free and compatible with $P$, so it is the connection given by Theorem~\ref{maintheorem} (which is here unique since $G$ is discrete).

We quickly outline the proof of Proposition~\ref{Gstructw}, using the analysis of \cite[Chapter 4]{Crai}. Denote by $\mathfrak{g}$ the Lie algebra of $G$ and by $\mathrm{ad} P$ the adjoint bundle of $P$ (which is a vector subbundle of the bundle of endomorphisms of $TM$). The set of connections on $TM$ compatible with $P$ is an affine space modeled on $\Omega^1 (M, \mathrm{ad} P)$. For any $\xi \in \Omega^1 (M, \mathrm{ad} P)$, we define $(\partial \xi) (X,Y) := \xi (X) (Y) - \xi (Y)(X)$ where $X, Y \in TM$ and we consider the set
\begin{equation}
\mathcal T_P := \frac{\Omega^2 (M, TM)}{\partial (\Omega^1 (M, \mathrm{ad} P))}.
\end{equation}
The {\em intrinsic torsion} $T_P^\mathrm{int}$ of $P$ is the equivalence class $[T_\nabla] \in \mathcal T_P$ where $T_\nabla$ is the torsion of any connection $\nabla$ compatible with $P$. This is well-defined because if $\nabla'$ is another connection, there is $\xi \in \Omega^1 (M, \mathrm{ad} P)$ such that $\nabla' = \nabla + \xi$, and an easy computation leads to $T_{\nabla'} = T_\nabla + \partial (\xi)$. Then, there exists a torsion-free connection on $P$ if and only if $T_P^\mathrm{int} = 0$.

For any $x \in M$, fix a frame $u \in P_x$ (which identifies $\RR^n$ with $T_x M$). For any $\phi \in \Lambda^2 (\RR^n)^* \otimes \RR^n$, let $\xi \in (\RR^n)^* \otimes \mathrm{End} (\RR^n)$ be given by
\begin{align}
2 \xi (X) (Y) := \phi (X,Y) - \phi(X, \cdot)^* (Y) - \phi (Y, \cdot)^* (X) && X, Y \in \RR^n,
\end{align}
where "$^*$" denotes the adjoint with respect to the standard metric on $\RR^n$. By construction, one has $\partial \xi = \phi$ and $\xi (X)$ is skew-symmetric for every $X \in \RR^n$. Since $\mathfrak{o}_n (\RR) \subset \mathfrak{g}$, we have $\xi \in (\RR^n)^* \otimes \mathfrak g$. We deduce that $\partial (\Omega^1 (M, \mathrm{ad} P)) = \Omega^2 (M, TM)$, implying $\mathcal T_P = 0$, thus $T_P^\mathrm{int} = 0$, which gives the result.

\section{Proof of Theorem~\ref{maintheorem}}

The proof of Theorem~\ref{maintheorem} relies on the classification of the subgroups of $\mathrm{GL}_n (\RR)$ containing $\mathrm{SO}_n (\RR)$.

In all this text, we will denote by $\mathrm{Diag} (a_1, \ldots, a_n)$ the diagonal matrix with diagonal coefficients $a_1, \ldots, a_n$ (we will also use this notation for any block diagonal matrix) and we will denote by $sgn :\RR \to \{ -1,0,1 \}$ the sign function. We first show the maximality of $\mathrm{SO}_n (\RR)$ in $\mathrm{SL}_n (\RR)$, which is a known result, but we recall a proof for the reader's convenance following partly an answer given by Yves Cornulier on the forum MathStackExchange.

\begin{lemma} \label{maximality}
Let $G$ be a subgroup of $\mathrm{SL}_n (\RR)$ containing $\mathrm{SO}_n (\RR)$. Then, $G = \mathrm{SL}_n (\RR)$ or $G = \mathrm{SO}_n (\RR)$.
\end{lemma}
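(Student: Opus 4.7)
The plan is to combine the polar (singular value) decomposition with an open-subgroup argument inside the connected Lie group $\mathrm{SL}_n(\RR)$.

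First, I would use the singular value decomposition: any matrix $A \in \mathrm{SL}_n(\RR)$ can be written as $A = U S V$ with $U, V \in \mathrm{SO}_n(\RR)$ and $S = \mathrm{Diag}(s_1, \ldots, s_n)$ satisfying $s_i > 0$ and $\prod_i s_i = 1$. If $A \in G \setminus \mathrm{SO}_n(\RR)$, then $S = U^{-1} A V^{-1} \in G$ is a non-trivial diagonal matrix of this form, and the lemma reduces to proving that $\langle \mathrm{SO}_n(\RR), S \rangle = \mathrm{SL}_n(\RR)$ for any such $S$.

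To prove this reduced statement, I would exhibit an open neighborhood of the identity inside $G$; since $\mathrm{SL}_n(\RR)$ is connected and an open subgroup of a connected topological group equals the whole group, this suffices. The candidate smooth families in $G$ come from commutator maps such as $R \mapsto R S R^{-1} S^{-1}$ for $R \in \mathrm{SO}_n(\RR)$, which take values in $G$, fix the identity at $R = I$, and have differential $X \mapsto X - S X S^{-1}$ on $\mathfrak{so}_n(\RR)$. Iterating such commutators and using both the $\mathrm{SO}_n(\RR)$-conjugation invariance of $G$ and the standard irreducibility of the adjoint action of $\mathrm{SO}_n(\RR)$ on the space $\mathfrak{p}$ of traceless symmetric matrices (for $n \geq 2$), one expects the spanned tangent directions in $\mathfrak{sl}_n(\RR)$ to cover everything; the submersion theorem then produces an open subset of $\mathrm{SL}_n(\RR)$ inside $G$.

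The main obstacle will be arranging a single smooth map into $G$ whose differential actually surjects onto $\mathfrak{sl}_n(\RR)$, since the differential of a single commutator $R \mapsto [R, S]$ only has image of dimension at most $\binom{n}{2}$. A cleaner concrete route is via the $KAK$ (Cartan) decomposition $\mathrm{SL}_n(\RR) = \mathrm{SO}_n(\RR) \cdot A^+ \cdot \mathrm{SO}_n(\RR)$, where $A^+$ denotes the diagonal matrices with positive decreasing entries and determinant $1$. Since $\mathrm{SO}_n(\RR) \subset G$, one has $G = \mathrm{SO}_n(\RR) \cdot (G \cap A^+) \cdot \mathrm{SO}_n(\RR)$, so it suffices to show that the subgroup $G \cap A$ is all of the diagonal torus $A \cong \RR^{n-1}$. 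This can be achieved by reading off the $a$-factor in the $KAK$ decomposition of $S R S$ as $R$ varies in a small neighborhood of $I$ in $\mathrm{SO}_n(\RR)$; the non-triviality of $S$ forces this family to fill a neighborhood of the identity in $A$, and a subgroup of $\RR^{n-1}$ containing an open set must be all of $\RR^{n-1}$.
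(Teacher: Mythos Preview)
Your reduction via the singular value decomposition to a non-trivial positive diagonal $S \in G$, followed by the aim of showing $G \cap A = A$ for the diagonal torus $A$, is exactly the skeleton of the paper's proof. The gap is in the final step. In your $KAK$ route you assert that the $a$-factor of $SRS$ fills an open set in $A$ as $R$ varies near $I$ in $\mathrm{SO}_n(\RR)$, but this map has \emph{zero differential} at $R = I$: writing $R = \exp(tX)$ with $X$ skew, one finds $(SRS)^T(SRS) = S^4 + t\,S[S^2,X]S + O(t^2)$, and $S[S^2,X]S$ has vanishing diagonal, so the singular values of $SRS$ are stationary to first order. Hence no submersion argument is available at $R = I$, and the claim that the image is open is unjustified (note also that the value at $R = I$ is $S^2$, not the identity). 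The commutator route has the same defect: the differential of $R \mapsto RSR^{-1}S^{-1}$ at $I$ has rank at most $\binom{n}{2} < n^2 - 1$, and taking products of several such commutators based at $(I,\ldots,I)$ only adds up copies of the same image, so the rank does not increase. Invoking the irreducibility of $\mathfrak p$ under $\mathrm{Ad}(\mathrm{SO}_n)$ would show $\mathrm{Lie}(\overline{G}) = \mathfrak{sl}_n$, but the lemma does not assume $G$ closed, so density is not enough.

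The paper avoids all of this with an elementary argument that stays inside $G$ itself. For $n = 2$ it looks at $A R_\theta A$ with $A = \mathrm{Diag}(a, a^{-1})$ and applies the intermediate value theorem to the largest singular value, which runs continuously from $a^2$ at $\theta = 0$ down to $1$ at $\theta = \pi/2$; since the $a$-factor of any element of $G$ again lies in $G$, this puts every $\mathrm{Diag}(x, x^{-1})$ with $x \in [1, a^2]$ into $G$, and then all $x > 1$ by iteration. For $n \geq 3$ it uses that $G \cap \mathcal D$ is stable under permutation of the diagonal entries (conjugation by signed permutation matrices in $\mathrm{SO}_n(\RR)$) to manufacture $\mathrm{Diag}(a_1/a_2, a_2/a_1, 1, \ldots, 1) \in G$ and then applies the $n = 2$ case inside each $2 \times 2$ block. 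The point is that the intermediate value theorem produces actual elements of $G$, not merely of its closure, so no smoothness or closedness hypothesis is needed.
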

\begin{proof} For $n = 1$ there is nothing to prove. For $n = 2$, suppose that there exists $A \in G \setminus \mathrm{SO}_2 (\RR)$. Using the singular value decomposition, one can assume that $A = \mathrm{Diag} (a, \frac{1}{a})$ with $a > 1$. For $\theta \in \RR$ let $R_\theta$ be the rotation of angle $\theta$. Let $\psi$ be the map which associates to an element of $\mathrm{SL}_n (\RR)$ the largest eigenvalue of the symmetric part of its polar decomposition. This map is continuous and  one has $\psi(A R_0 A) = a^2$ and $\psi(A R_{\pi/2} A) = 1$. Thus, by the intermediate value theorem, for any $x \in [1, a^2]$, the matrix $\mathrm{Diag} (x, \frac{1}{x})$ is in $G$, and this is true for any $k \in \NN$ and $x \in [1,a^k]$ by induction, so it is true for any $x > 1$, which gives the result.

It remains to treat the case where $n \ge 3$ using the case $n = 2$. Assume that there is $A \in G \setminus \mathrm{SO}_n (\RR)$. We can assume that $A$ is diagonal with positive coefficients using the singular value decomposition, thus $A = \mathrm{Diag} (a_1, a_2, \ldots, a_n)$, and conjugating by a suitable matrix in $\mathrm{SO}_n (\RR)$ we can assume that $a_1 \neq a_2$. Let $B \in \mathrm{SL}_n (\RR)$. We want to prove that $B \in G$. By another use of the singular value decomposition, we can assume that $B = \mathrm{Diag} (b_1, b_2, \ldots, b_n)$ where the coefficients are positive. Moreover, one has
\begin{align*}
\mathrm{Diag} (b_1, b_2, \ldots, b_n) = \mathrm{Diag} (b_1, \frac{1}{b_1}, 1, \ldots, 1) \cdot \mathrm{Diag} (1, b_1 b_2, \frac{1}{b_1 b_2}, 1, \ldots, 1) \\
\ldots \mathrm{Diag} (1, \ldots, 1, (b_1 \ldots b_{n-1}), \frac{1}{b_1 \ldots b_{n-1}} = b_n),
\end{align*}
thus it is sufficient to prove that each of the factors appearing in the right-hand side are in $G$. By conjugating by a suitable element of $\mathrm{SO}_n (\RR)$, it is sufficient to prove that for any $x > 0$ the matrix $\mathrm{Diag} (x, x^{-1}, 1, \ldots, 1)$ is in $G$. We now consider the matrix $R = \mathrm{Diag} (R_{\pi/2}, I_{n-2}) \in \mathrm{SO}_n (\RR)$, and we remark that
\[
A^{-1} R A R^{-1} = \mathrm{Diag} (\frac{a_2}{a_1}, \frac{a_1}{a_2}, 1, \ldots, 1).
\]
Since $\frac{a_2}{a_1} \neq 1$, the case $n = 2$ implies that $G$ contains all the matrices $\mathrm{Diag} (x, x^{-1}, 1, \ldots, 1)$ for $x > 0$, which gives that $B \in  G$ and concludes the proof.
\end{proof}

The following lemma will also be important in the description of the groups containing $\mathrm{SO}_n(\RR)$.

\begin{lemma} \label{lemma2}
Let $G$ be a subgroup of $\mathrm{GL}_n (\RR)$ containing $\mathrm{SO}_n (\RR)$, and let $x \in\det(G)$. Then, $\vert x \vert^{\frac{1}{n}} \mathrm{Diag}(sgn(x), 1, \ldots, 1) \in G$.
\end{lemma}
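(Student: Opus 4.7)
My plan is to use Lemma~\ref{maximality} on the subgroup $G \cap \mathrm{SL}_n(\RR)$, which must equal either $\mathrm{SL}_n(\RR)$ or $\mathrm{SO}_n(\RR)$. Fix $A \in G$ with $\det A = x$ and let $M := |x|^{\frac{1}{n}} \mathrm{Diag}(sgn(x), 1, \ldots, 1)$, so that $\det M = x$. In the first alternative the element $MA^{-1}$ has determinant $1$, hence belongs to $\mathrm{SL}_n(\RR) \subset G$, and therefore $M = (MA^{-1}) A \in G$ at once. The rest of the argument treats the second alternative, where I will establish the stronger claim that $G \subset \mathrm{CO}_n(\RR)$. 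Once this is known, $A$ must be of the form $|x|^{\frac{1}{n}} O$ for some $O \in \mathrm{O}_n(\RR)$ with $\det O = sgn(x)$, and then $MA^{-1}$ is an orthogonal matrix of determinant $1$, hence lies in $\mathrm{SO}_n(\RR) \subset G$, again giving $M \in G$.

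To prove $G \subset \mathrm{CO}_n(\RR)$ I take any $A \in G$ with polar decomposition $A = QS$ ($Q \in \mathrm{O}_n(\RR)$, $S$ positive symmetric definite) and show that $S$ is a positive multiple of the identity. When $\det A > 0$ we have $Q \in \mathrm{SO}_n(\RR) \subset G$ and hence $S \in G$. Diagonalizing $S = PDP^{-1}$ with $P \in \mathrm{SO}_n(\RR)$ (a sign can be absorbed into the basis) and $D = \mathrm{Diag}(d_1, \ldots, d_n)$ positive yields $D \in G$. The $\pi/2$-rotation in any coordinate plane lies in $\mathrm{SO}_n(\RR)$ and swaps the corresponding entries when used to conjugate $D$; composing these produces every permutation of the diagonal of $D$ inside $G$. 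The quotient of $D$ by one of these permuted copies is a positive diagonal element in $G \cap \mathrm{SL}_n(\RR) = \mathrm{SO}_n(\RR)$, which must be $I$. Hence all $d_i$ coincide and $S = |x|^{1/n} I$.

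When $\det A < 0$, instead $Q \in \mathrm{O}_n(\RR) \setminus \mathrm{SO}_n(\RR)$; writing $Q = RQ_1$ with $R := \mathrm{Diag}(-1, 1, \ldots, 1)$ and $Q_1 \in \mathrm{SO}_n(\RR)$, the element $B := AQ_1^{-1} = RS'$ lies in $G$, where $S' := Q_1 S Q_1^{-1}$ is positive symmetric definite of determinant $|x|$. The square $B^2 = (RS'R)\,S'$ is a product of two positive symmetric definite matrices, so its eigenvalues are real and positive; since $B^2 \in G$ has positive determinant $x^2$, the previous paragraph applies and gives $B^2 = |x|^{2/n}\tilde Q$ for some $\tilde Q \in \mathrm{SO}_n(\RR)$. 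Positivity of the spectrum combined with ``on the unit circle'' forces $\tilde Q = I$, i.e.\ $B^2 = |x|^{2/n} I$. Writing $S' = \bigl(\begin{smallmatrix} a & b^T \\ b & C \end{smallmatrix}\bigr)$ in block form and expanding this identity extracts the relations $a^2 - \|b\|^2 = |x|^{2/n}$, $Cb = ab$, and $C^2 = |x|^{2/n} I_{n-1} + bb^T$.

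The main obstacle is now to deduce $S' = |x|^{1/n} I$, since the extracted constraints alone still permit $\|b\| > 0$. The plan is to introduce a further element of $G$: I choose a rotation $Q_0 \in \mathrm{SO}_n(\RR)$ such that $N := B \cdot Q_0 B Q_0^{-1}$ is symmetric. For $n \ge 3$ a $\pi$-rotation in a $2$-plane of $\{e_1\}^\perp$ containing $b$ works (it commutes with $R$ and with $C$, and flips $b$ to $-b$); for $n = 2$ the $\pi/2$-rotation $\bigl(\begin{smallmatrix} 0 & -1 \\ 1 & 0 \end{smallmatrix}\bigr)$ does the job by direct verification. In both cases $N \in G$ has positive determinant $x^2$, so the positive-determinant analysis gives $N = |x|^{2/n}\tilde Q'$ with $\tilde Q' \in \mathrm{SO}_n(\RR)$; the symmetry of $N$ forces $\tilde Q'$ to be an involution, whence $|\mathrm{tr}(N)| \le n |x|^{2/n}$. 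On the other hand, a direct block computation using the previously extracted constraints yields $|\mathrm{tr}(N)| = n|x|^{2/n} + 4\|b\|^2$, and juxtaposing the two bounds forces $\|b\| = 0$. The remaining constraints then give $a = |x|^{1/n}$ and $C = |x|^{1/n} I_{n-1}$, so $S' = |x|^{1/n} I$ and $B = M$, completing the proof.
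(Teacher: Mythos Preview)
Your proof is correct, but it takes a substantially longer route than the paper's. Both arguments begin by invoking Lemma~\ref{maximality} on $G\cap\mathrm{SL}_n(\RR)$, and the case $G\cap\mathrm{SL}_n(\RR)=\mathrm{SL}_n(\RR)$ is handled identically. The difference lies in the other alternative. You prove the strong statement $G\subset\mathrm{CO}_n(\RR)$ directly, which for elements of negative determinant forces you into the block computation with $B^2=|x|^{2/n}I$, the auxiliary symmetric element $N$, and the trace estimate. The paper instead argues contrapositively: starting from a diagonal $D\in G$ with $\det D=x$, if $D$ is not already of the form $|x|^{1/n}\mathrm{Diag}(\pm1,\dots,\pm1)$ then $D^2\notin\RR I_n$, so some $S\in\mathrm{SO}_n(\RR)$ fails to commute with $D^2$, and the single element $B:=D^{-1}S^TDS$ lies in $(G\cap\mathrm{SL}_n(\RR))\setminus\mathrm{SO}_n(\RR)$, forcing $G\supset\mathrm{SL}_n(\RR)$ after all. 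This avoids any case distinction on the sign of the determinant and any block calculation. What your approach buys is that it already contains the essence of Proposition~\ref{structure} (the inclusion $G\subset\mathrm{CO}_n(\RR)$ in the $\mathrm{SO}_n$ case); what the paper's approach buys is brevity, deferring the structural classification to a separate statement.

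One small point: your assertion that the $\pi$-rotation $Q_0$ in a $2$-plane of $\{e_1\}^\perp$ containing $b$ commutes with $C$ is correct, but it is not automatic from the choice of plane alone. It relies on the fact (implicit in your constraints $Cb=ab$ and $C^2=|x|^{2/n}I_{n-1}+bb^T$, with $C$ positive definite) that $C$ acts as the scalar $|x|^{1/n}$ on $\{b\}^\perp$, so any $2$-plane through $b$ is $C$-invariant. You might make this explicit.
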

\begin{proof}
Let $x \in \det(G)$. There is a matrix $A \in G$ such that $\det(A) = x$, and using the singular value decomposition of $A$, there is a diagonal matrix $D \in G$ with $\det(D) = x$. If $D$ is of the form $\vert x \vert^{\frac{1}{n}} \mathrm{Diag} (\pm 1, \ldots, \pm 1)$ we have the conclusion of the lemma after multiplying by an element of $\mathrm{SO}_n (\RR)$ of the form $\mathrm{Diag} (\pm 1, \ldots, \pm 1)$, so we assume that $D^2 \notin \Span(I_n)$. There is a matrix $S \in \mathrm{SO}_n (\RR)$ with $S D^2 \neq D^2 S$. Let $B := D^{-1} S^T D S \in \mathrm{SL}_n (\RR)$. One has
\[
B B^T = D^{-1} S^T D S S^T D S D^{-1} = D^{-1} S^T D^2 S D^{-1} = (D^{-1} S D)^{-1} (D S D^{-1}),
\]
then
\[
B B^T = I_n \Leftrightarrow D^{-1} S D = D S D^{-1} \Leftrightarrow D^2 S = S D^2 ,
\]
and this last assertion is false, thus $B B^T \neq I_n$ and $B \notin \mathrm{SO}_n (\RR)$. By Lemma~\ref{maximality}, we conclude that $G \cap \mathrm{SL}_n (\RR) = \mathrm{SL}_n (\RR)$, and in particular $\vert x \vert^{\frac{1}{n}} \mathrm{Diag}(sgn(x), 1, \ldots, 1) D^{-1} \in G$, so $\vert x \vert^{\frac{1}{n}} \mathrm{Diag}(sgn(x), 1, \ldots, 1) \in G$ after multiplication by $D$ on the right.
\end{proof}

One writes $\mathrm{GL}_n (\RR) = \mathrm{SL}_n (\RR) \rtimes \RR^*$ with the identification $\{ \mathrm{Id} \} \rtimes \RR^* \to \mathrm{GL}_n (\RR), x \mapsto \vert x \vert^{\frac{1}{n}} \mathrm{Diag} (sgn(x), 1, \ldots, 1)$. We finally give the classification result:

\begin{prop} \label{structure}
Let $G$ be a subgroup of $\mathrm{GL}_n (\RR)$ containing $\mathrm{SO}_n (\RR)$. There exists a subgroup $H$ of $(\RR^*, \times)$ such that $G$ is equal to either $\mathrm{SO}_n (\RR) \rtimes H$ or $\mathrm{SL}_n (\RR) \rtimes H$. Moreover, if $G$ is closed, so is $H$.
\end{prop}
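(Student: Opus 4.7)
The plan is to split the determinant $G \to \det(G)$ explicitly by means of the section
$s(x) := \vert x \vert^{\frac{1}{n}} \mathrm{Diag}(sgn(x), 1, \ldots, 1)$
already exhibited above the proposition, and to identify its kernel via Lemma~\ref{maximality}. First I set $H := \det(G)$, a subgroup of $(\RR^*, \times)$, and check that the map $s : \RR^* \to \mathrm{GL}_n(\RR)$ is a group homomorphism (the only non-routine verification being $sgn(x) sgn(y) = sgn(xy)$) satisfying $\det \circ s = \mathrm{id}_{\RR^*}$. The key input from the previous work is that Lemma~\ref{lemma2} reads exactly as the inclusion $s(H) \subseteq G$, so $s(H)$ is a subgroup of $G$ isomorphic to $H$.

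Next I look at the kernel $N := G \cap \mathrm{SL}_n(\RR)$, which is a subgroup of $\mathrm{SL}_n(\RR)$ containing $\mathrm{SO}_n(\RR)$, so Lemma~\ref{maximality} forces $N \in \{\mathrm{SO}_n(\RR), \mathrm{SL}_n(\RR)\}$. To obtain the semidirect decomposition $G = N \rtimes H$, I verify three points: $N$ is normal in $G$ as the kernel of the homomorphism $\det : G \to H$; any $A \in G$ factors as $A = (A \cdot s(\det A)^{-1}) \cdot s(\det A)$, with first factor in $N$ (it has determinant one) and second factor in $s(H)$; and $N \cap s(H) = \{I_n\}$, since $\det$ restricted to $s(H)$ is inverse to $s$ and hence injective. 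The action of $H$ on $N$ is the one induced by conjugation through the section $s$.

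For the topological statement, I note that $H$ coincides with $s^{-1}(G)$: $x \in H$ is equivalent to $s(x) \in G$, one direction being Lemma~\ref{lemma2} and the other following from $\det \circ s = \mathrm{id}_{\RR^*}$. Since $s$ is continuous, $H$ is then closed in $\RR^*$ whenever $G$ is closed in $\mathrm{GL}_n(\RR)$. I do not anticipate any genuine obstacle; the only step deserving a moment's attention is the normality of $N$ when $N = \mathrm{SO}_n(\RR)$, since $\mathrm{SO}_n(\RR)$ is not normal in $\mathrm{GL}_n(\RR)$, but it is normal in $G$ precisely because it coincides with the kernel of the restriction of $\det$ to $G$.
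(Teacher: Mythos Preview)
Your proof is correct and follows essentially the same approach as the paper: both split the exact sequence $1 \to G \cap \mathrm{SL}_n(\RR) \to G \to \det(G) \to 1$ via the section $s$ (denoted $\phi$ in the paper), invoke Lemma~\ref{lemma2} to show the section lands in $G$, and apply Lemma~\ref{maximality} to identify the kernel. Your closedness argument via $H = s^{-1}(G)$ and continuity of $s$ is slightly slicker than the paper's, which instead argues that a non-discrete $H$ would force $H \cap \RR_+^*$ to be dense and hence all of $\RR_+^*$.
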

\begin{proof}
One has the following short exact sequence:
\begin{equation}
0 \to \mathrm{SL}_n (\RR) \cap G \to G \overset{\det}{\to} \det (G) \to 1.
\end{equation}
Now, let $\phi : H := \det (G) \to G$ given by $\phi (x) = \vert x \vert^{\frac{1}{n}} \mathrm{Diag}(sgn(x), 1, \ldots, 1)$, which is well-defined by Proposition~\ref{lemma2}. It is clear that $\phi$ is a morphism and $\det \circ \phi = id_H$, thus one has $G = (\mathrm{SL}_n (\RR) \cap G) \rtimes H$. Moreover, by Lemma~\ref{maximality} one has $\mathrm{SL}_n (\RR) \cap G = \mathrm{SL}_n (\RR)$ or $\mathrm{SL}_n (\RR) \cap G = \mathrm{SO}_n (\RR)$ because $G$ contains $\mathrm{SO}_n (\RR)$.

It remains to show that $H$ is closed when $G$ is closed. But if $H$ is non-discrete, $H \cap \RR_+^*$ has to be dense in $\RR_+^*$, so, $G$ being closed, it contains all the matrices of the form $\vert x \vert^{\frac{1}{n}} I_n$, $x \in \RR$, and then $H = \det G = \RR_+^*$ or $\RR^*$.
\end{proof}

\begin{rem}
Note that in Proposition~\ref{structure}, the semi-direct product is actually direct when $H \subset \RR_+^*$ or when $n$ is odd.
\end{rem}

Finally, we give the proof of the main theorem, for which we recall the following definition:

\begin{definition} \label{SIm}
Let $(N_1, g_1)$, $(N_2,g_2)$ be two Riemannian manifolds. A similarity between $N_1$ and $N_2$ is a diffeomorphism $\phi : N_1 \to N_2$ such that there exists $\lambda \in \RR^+_*$ with $\lambda^2 g_1 = \phi^* g_2$. In this case, $\lambda$ is called the ratio of the similarity.
\end{definition}

\begin{proof}[Proof of Theorem~\ref{maintheorem}]
According to Lemma~\ref{structure}, there is a closed subgroup $H$ of $\RR^*$ such that $G \simeq \mathrm{SO}_n (\RR) \rtimes H$ or $\mathrm{SL}_n (\RR) \rtimes H$. From the classification of the subgroups of $\RR^*$, $H$ is either $\RR^*$, $\RR^*_+$ or discrete.

{\bf First case: $H = \RR^*$ or $H = \RR^*_+$.} In this case, $G$ is either $\mathrm{GL}_n (\RR)$ or $\mathrm{CO}_n (\RR)$ or $\mathrm{GL}^+_n (\RR)$ or $\mathrm{CO}^+_n (\RR)$. In all these cases, there is a metric $g$ compatible with the $G$-structure, i.e. a reduction $P'$ of $P$ to $G \cap \mathrm{O}_n ({\RR})$. Then, the Levi-Civita connection of $g$ is torsion-free, so it induces a torsion-free connection on $P'$, and thus a torsion-free connection on the extension $Q$ of $P'$ to $G \cap \mathrm{CO}_n (\RR)$. The resulting connection on the extension of $Q$ to $\mathrm{CO}_n (\RR)$ is a closed (actually exact) Weyl structure because it is induced by the Levi-Civita connection of a metric on $M$.

{\bf Second case: $H$ is discrete.} Let $\Tilde M$ be the universal cover of $M$ and let $\Tilde P$ be the pull-back of $P$ to $\Tilde M$.

We first study the case $G = \mathrm{SO}_n (\RR) \rtimes H$. Then, the $H$-principal bundle $\Tilde P / \mathrm{SO}_n (\RR)$ is a covering of $\Tilde M$ so it is trivial. Every element $a \in H$ thus defines an $\mathrm{SO}_n (\RR)$-structure on $\Tilde M$ i.e. a metric $\Tilde g$. Since $\pi_1 (M)$ acts on $P / \mathrm{SO}_n (\RR)$ by multiplication by an element of $H$, we deduce that $\pi_1 (M)$ acts by similarities on $(\Tilde M, \Tilde g)$. Consequently, the Levi-Civita connection of $\Tilde g$ induces a torsion-free connection on $\Tilde P$ which descends to a torsion-free connection on $P$. We can take $Q := P$ in the statement of the theorem since $G \subset \mathrm{CO}_n (\RR)$. Finally, the resulting connection on the extension of $P$ to $\mathrm{CO}_n (\RR)$ is a closed Weyl structure because it is locally given by the Levi-Civita covariant derivative of a Riemannian metric defined by a local reduction of $P$ to $G \cap \mathrm{O}_n (\RR)$.

We consider now the case $G = \mathrm{SL}_n (\RR) \rtimes H$. Just as before, the $H$-principal bundle $\Tilde P / \mathrm{SL}_n (\RR)$ is trivial. Choosing an element $a \in H$ defines a $\mathrm{SL}_n (\RR)$-structure $\Tilde Q$ on $\Tilde M$ i.e. a volume form $\Tilde v$, and in particular an orientation on $\Tilde M$. Let $h$ be a Riemannian metric on $M$, and let $\Tilde h$ be its pull-back to $\Tilde M$. Let $v_h$ be the volume with respect to $\Tilde v$ of a $\Tilde h$-orthonormal frame of $T \Tilde M$ (note that $v_h^2$ does not depend on the choice of the frame). We define $\Tilde g := (v_h^2)^\frac{1}{n} \Tilde h$. Then, any oriented $\Tilde g$-orthonormal frame has volume $1$ with respect to $\Tilde v$. This implies that $\Tilde g$ defines a reduction of $\Tilde Q$ to $\mathrm{SO}_n (\RR)$. As in the previous case, $\pi_1 (M)$ acts on $P / \mathrm{SL}_n (\RR)$ by multiplication by an element of $H$, so for $\gamma \in \pi_1(M)$, $\gamma^* \Tilde v$ is a multiple of $\Tilde v$. Since, $\pi_1(M)$ acts by isometries on $(\Tilde M, \Tilde h)$, it acts by similarities on $(\Tilde M, \Tilde g)$. We finally conclude in the same way as for the case $G = \mathrm{SO}_n (\RR) \rtimes H$.
\end{proof}

From the proof we see that the principal bundle $Q$ defined in Theorem~\ref{maintheorem} has $\mathrm{SO}_n (\RR) \rtimes H'$ as structure group, where $H'$ is a discrete subgroup of $\RR_+^*$ (just take $H' := \{ 1 \}$ when $H = \RR^*$ or $\RR_+^*$, and $H' := H$ otherwise).

\vspace{1cm}

{\bf Acknowledgments.} The author thanks his advisor Andrei Moroianu for his help during the proofread of this note.

\renewcommand{\refname}{\bf References}

\end{document}